\theoremstyle{plain}
\newtheorem*{thm}{Theorem}
\newcommand{\comment}[1]{}
\newcommand{\bdry}{\ensuremath{\partial}}
\title{The Poincar\'e Homology Sphere \\and\\ Almost-Simple Knots in Lens Spaces}
\author{Kenneth L.\ Baker}
\thanks{This work is partially supported by grant \#209184 to Kenneth L.\ Baker from the Simons Foundation.}
\address{
Department of Mathematics
University of Miami,
PO Box 249085
Coral Gables, FL 33124-4250}
\email{k.baker@math.miami.edu}
\urladdr{http://math.miami.edu/\char126 kenken}
\begin{document}

\begin{abstract}
Hedden defined two knots in each lens space that, through analogies with their knot Floer homology and doubly pointed Heegaard diagrams of genus one, may be viewed as generalizations of the two trefoils in $S^3$.  Rasmussen shows that when the `left-handed' one is in the homology class of the dual to a Berge knot of type VII, it admits an L-space homology sphere surgery.  In this note we give a simple proof that these L-space homology spheres are always the Poincar\'e homology sphere.

\end{abstract}
\maketitle

Beginning with the standard genus $1$ Heegaard diagram for the lens space $L(p,q)$ with $p>q>0$ where an $\alpha$ curve and a $\beta$ curve intersect $p$ times, there are just two simple isotopies of these curves that produce a genus one Heegaard diagram with $p+2$ intersection points.  Figure~\ref{fig:heegaardtwist} adds two basepoints to a local view of each of the standard diagram (center) and the two isotoped diagrams (left and right) to form doubly pointed Heegaard diagrams for knots in the lens space.  The center knot is the {\em simple} knot $K(p,q,q+1)$ in the language and notation of \cite{rasmussen}. (Such a knot is called {\em grid number one} in \cite{bgh}.)  The knots to the left and right are Hedden's knots $T_L$ and $T_R$ \cite{hedden}.  The knots $T_L$ and $T_R$ are each homologous to $K(p,q,q+1)$ since they each differ from $K(p,q,q+1)$ by a crossing change as demonstrated in Figure~\ref{fig:3d!}.  Due to their knot Floer homologies being as close to that of the simple knots without actually being so, we refer to these two knots as {\em almost-simple}.

\begin{figure}
\centering
\includegraphics[width=4.75in]{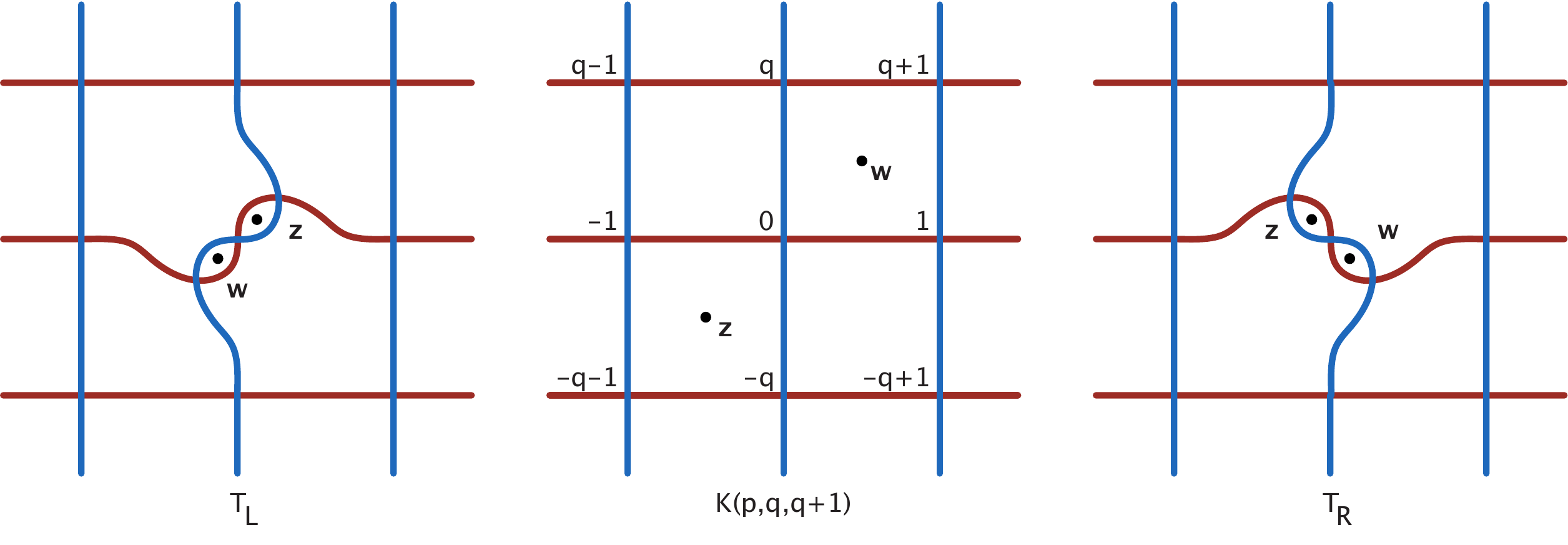}
\caption{}
\label{fig:heegaardtwist}
\end{figure}

Rasmussen shows that exactly when $K(p,q,q+1)$ is the dual to a Berge knot of type VII (i.e.\ when $q^2+q+1 \equiv 0 \mod p$) does $T_L$ admit an integral Dehn surgery producing an integral homology sphere that is also an L-space \cite[\S 5]{rasmussen}.  He states that he determined the surgered manifolds for these $T_L$ in lens spaces as large as $L(39,16)$ to actually be the Poincar\'e homology sphere $P^3$ by checking their fundamental groups.  That all these $T_L$ that are homologous to duals of Berge knots of type VII indeed do admit Poincar\'e homology sphere surgeries is then one part of Greene's Conjecture~1.9 \cite{greene}.  Here we provide a short proof.

\begin{thm}
If $T_L$ is homologous to the dual to a Berge knot of type VII, then it admits an integral surgery producing the Poincar\'e homology sphere.
\end{thm}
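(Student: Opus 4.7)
The plan is to realize $T_L$ as a crossing change on $K(p,q,q+1)$ inside $L(p,q)$, then dualize through the Berge surgery to reduce the assertion to an integer surgery on a knot in $S^3$ that can be identified as the trefoil.

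First I would make explicit the crossing circle $C\subset L(p,q)\smallsetminus K(p,q,q+1)$ underlying the crossing change described in the paragraph around Figure~\ref{fig:heegaardtwist}. The curve $C$ is an unknot in $L(p,q)$ bounding a disk $D$ that meets $K(p,q,q+1)$ in two points of opposite sign; performing $\epsilon$-surgery on $C$ (with $\epsilon=-1$ matching the left-handed change producing $T_L$) leaves $L(p,q)$ unchanged while replacing $K(p,q,q+1)$ by $T_L$. This packages the middle-to-left transition of Figure~\ref{fig:heegaardtwist} as a single surgery datum.

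Next, since $q^2+q+1\equiv 0\pmod p$, the knot $K(p,q,q+1)$ is dual to a Berge knot $B\subset S^3$ of type \VII, so there is an integer slope $s$ on $K(p,q,q+1)$ whose surgery yields $S^3$ and sends $K(p,q,q+1)$ to $B$. Let $C'\subset S^3\smallsetminus B$ be the image of the crossing circle under this dualization. Doing the two surgeries on the link $K(p,q,q+1)\cup C$ in the opposite order — first $\epsilon=-1$ on $C$ to obtain $T_L$, then $s$ on $T_L$ — identifies the desired surgery on $T_L$ with $(-1)$-surgery on $C'$ in $S^3$:
\[
(L(p,q))_s(T_L)\;=\;S^3_{-1}(C').
\]

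The crux is to identify $C'$ with the right-handed trefoil $T\subset S^3$. Berge knots of type \VII\ are characterized by lying on the genus-one fiber surface of $T$, and the dualizing $s$-surgery converts a solid-torus neighborhood of $K(p,q,q+1)$ in $L(p,q)$ to a solid-torus neighborhood of the trefoil core in $S^3$. Using the explicit genus-one Heegaard picture one can isotope $C$ into (a neighborhood of) this solid torus and read off the slope it acquires on the boundary; the congruence $q^2+q+1\equiv 0\pmod p$ is exactly what forces this slope to match that of the trefoil fiber, so that $C'$ is isotopic to $T$. Granting this, $S^3_{-1}(T)=P^3$ and we are done. The main obstacle is this last step: the formal crossing-change/link-surgery manipulation is routine, but the slope tracking that converts $C$ into the trefoil is the place where the type-\VII\ hypothesis must be used essentially, and it is the only non-formal ingredient in the argument.
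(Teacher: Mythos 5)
Your overall strategy --- encode the crossing change taking $K(p,q,q+1)$ to $T_L$ as $\pm1$ surgery on a crossing circle $C$ bounding a disk that meets the knot twice, commute the two surgeries on the link $K(p,q,q+1)\cup C$, and recognize the image $C'$ of $C$ in $S^3$ as a trefoil --- is essentially the paper's argument run in the opposite direction. The paper starts in $S^3$ with the trefoil $J$ and a type VII Berge knot $K$ on its fiber $F_0$, performs the surface-framed surgery on $K$ first (after which $F_0$ becomes a disk $D_0$ with $J=\partial D_0$ meeting the dual knot twice --- this $J$ is precisely your crossing circle), and then swaps the order of the surgeries on $K\cup J$. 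The commuting-of-surgeries step is common to both write-ups and is not where the difficulty lies.

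The genuine gap is exactly the step you flag as ``the main obstacle'': you never establish that $C'$ is a trefoil with the correct framing, and that identification is the entire mathematical content of the theorem rather than a deferred technicality. Saying that the congruence $q^2+q+1\equiv 0 \pmod p$ ``is exactly what forces this slope to match'' is not an argument: the congruence only guarantees that some integral surgery on $K(p,q,q+1)$ returns $S^3$ carrying a type VII Berge knot; it says nothing by itself about where the crossing circle is sent. The paper never has to make this identification in your direction, because it builds the whole configuration from the $S^3$ side, where $J$ is the trefoil by construction; the only identification it then owes is that the twisted dual knot $T'$ equals $T_L$, and it settles that with Floer-theoretic input (Rasmussen's bound $\mathrm{rk}\,\widehat{HFK}(L(p,q),T')\le p+2$, Hedden's classification of the relevant $1$-bridge knots, and Ghiggini's theorem to exclude $T'$ being simple), or alternatively by the explicit Dehn-twist picture of Figure~\ref{fig:3d!}. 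To complete your version you would have to carry out the isotopy and framing computation you defer --- essentially the content of the remark following the paper's Proof 1. Separately, mind the signs: $S^3_{-1}$ of the \emph{right}-handed trefoil is $\Sigma(2,3,7)$, not $P^3$; the Poincar\'e sphere arises from $-1$ surgery on the left-handed trefoil (equivalently $+1$ on its mirror), so the handedness of $C'$ and the sign $\epsilon$ of your crossing change must be tracked consistently for the conclusion to come out as stated.
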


\begin{proof}[Proof 1]
Let $J$ be the positive (left-handed) trefoil, and let $F_0$ and $F_\pi$ be two of its fibers.  Then $F_0 \cup F_\pi$ is a genus $2$ Heegaard splitting of $S^3$.  Now consider an essential simple closed curve in $F_0$ as a knot $K$ embedded in $F_0$ in $S^3$.  Then $K$ is immediately a doubly primitive knot (\cite{berge-lens}) with respect to this Heegaard splitting .  The knots in $S^3$ that embed as essential curves on the once-punctured torus fiber of the positive trefoil and their mirrors are the Berge knots of type VII \cite{berge-lens}. 

One may show the surface framing that $F_0$ induces on $K$ has positive integral slope. Dehn surgery along this slope then yields a lens space $L(p,q)$ and the knot $K'=K(p,q,q+1)$ dual to $K$.  The surgery on $K$ then causes a surgery of $F_0$ along $K$, rendering it a disk $D_0$ that $K'$ intersects twice.    Indeed $D_0 \cup F_\pi$ is a Heegaard torus for the lens space $L(p,q)$, and, due to the doubly primitive presentation of $K$, $K'$ is an unknotted arc in the solid torus to on each side.  Notice that in $L(p,q)$, after surgery on $K$, the positive trefoil $J$ is now the boundary of this disk: $J = \bdry D_0$.  Therefore $+1$ surgery on $J$ in $L(p,q)$ simply inserts a full left-handed twist in $K'$.  We will conclude that this twist transforms $K'$ into $T_L$, though for now let us call the resulting knot $T'$.

The Poincar\'e homology sphere $P^3$ may be obtained as $+1$ surgery on $J$ in $S^3$.  Let us first do this surgery, regard $K \subset F_0$ as a knot in $P^3$, and then do $F_0$--framed surgery on $K$.  Because we are only swapping the order in which surgery is done on the link $K \cup J \subset S^3$, this must also produce the lens space $L(p,q)$ in which the knot dual to $K \subset P^3$ is the above knot $T'$, i.e.\ $K'$ with that full left-handed twist.  In particular, we conclude that $T' \subset L(p,q)$ is a $1$-bridge knot with respect to the Heegaard torus  $D_0 \cup F_\pi$, is in the homology class of a knot dual to a Berge knot of type VII, and admits an integral Poincar\'e homology sphere surgery.  

Because the Poincar\'e sphere $P^3$ is both an L-space and an integral homology sphere, \cite[Proposition 4.5]{rasmussen} implies that since $T'$ is a knot in the lens space $L(p,q)$ representing a primitive homology class (that of $K'=K(p,q,q+1)$) and admitting an integral surgery to $P^3$, then $\rm{rk}(\widehat{HFK}(L(p,q),T))\leq p+2$.  Because $T'$ is $1$-bridge but generically not the simple knot $K(p,q,q+1)$ (otherwise $T'=K(p,q,q+1)$ is actually dual to the trefoil $J$, the only knot in $S^3$ with an integral surgery to $P^3$ \cite{ghiggini}), we must have that $T'=T_L$ by \cite[Proposition 3.3]{hedden}.
\end{proof}

If one were to pay more attention to the meridional disks for handlebodies to either side of $F_0 \cup F_\pi$ that are disjoint from $K$, one could directly see that the twist on $J$ in $L(p,q)$ takes $K'=K(p,q,q+1)$ to $T_L$ without appealing to the knot Floer homology argument in the last paragraph of the proof above.  Indeed, $J$ is the loop in the local picture of Heegaard torus in the center of Figure~\ref{fig:heegaardtwist} that encircles the two basepoints.
Figure~\ref{fig:3d!} then illustrates that $T_L$ is obtained from $K(p,q,q+1)$ by a full Dehn twist on this curve.

\begin{figure}
\centering
$
\begin{array}{ccc}
\includegraphics[width=2.5in]{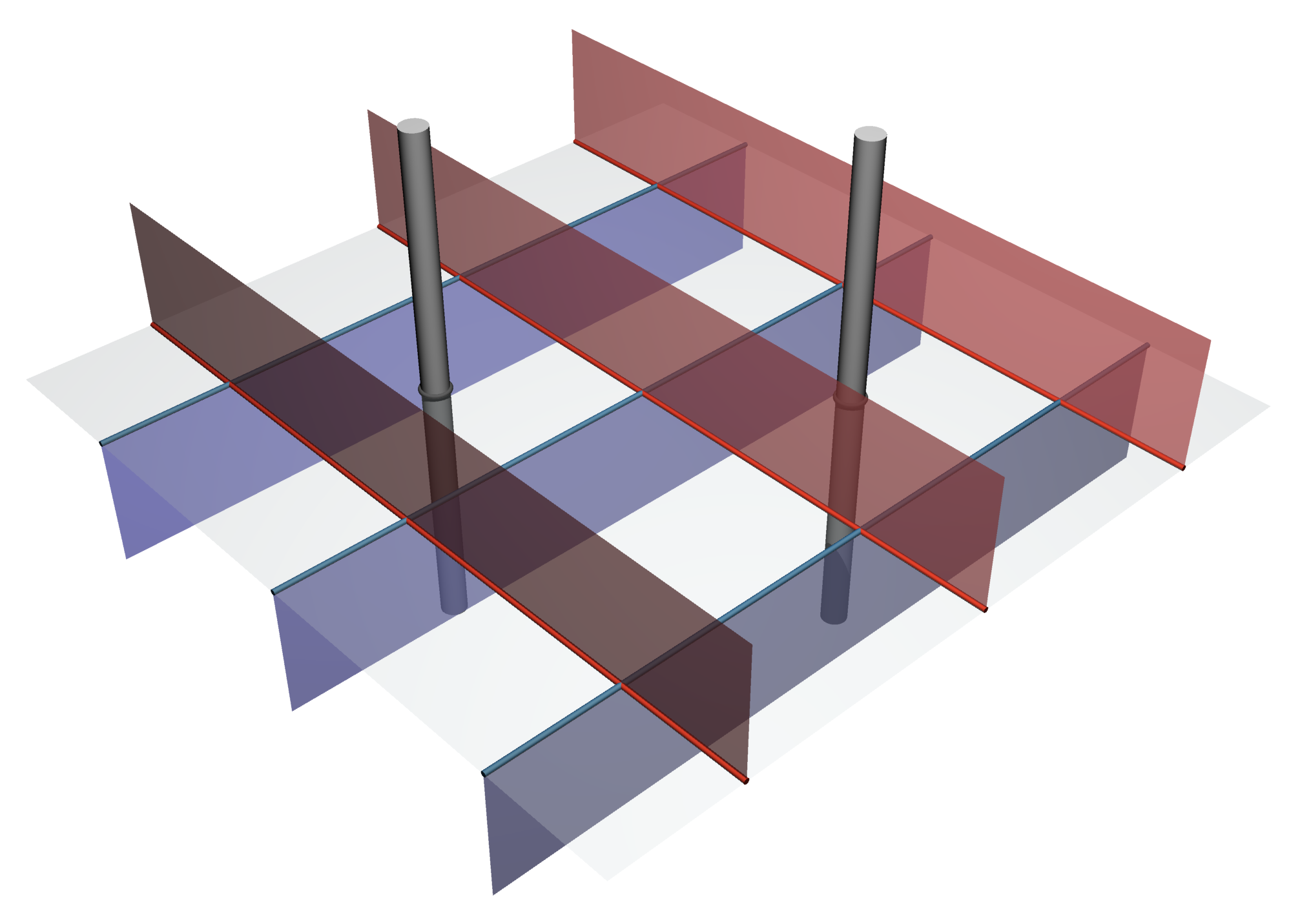} & & \includegraphics[width=2.5in]{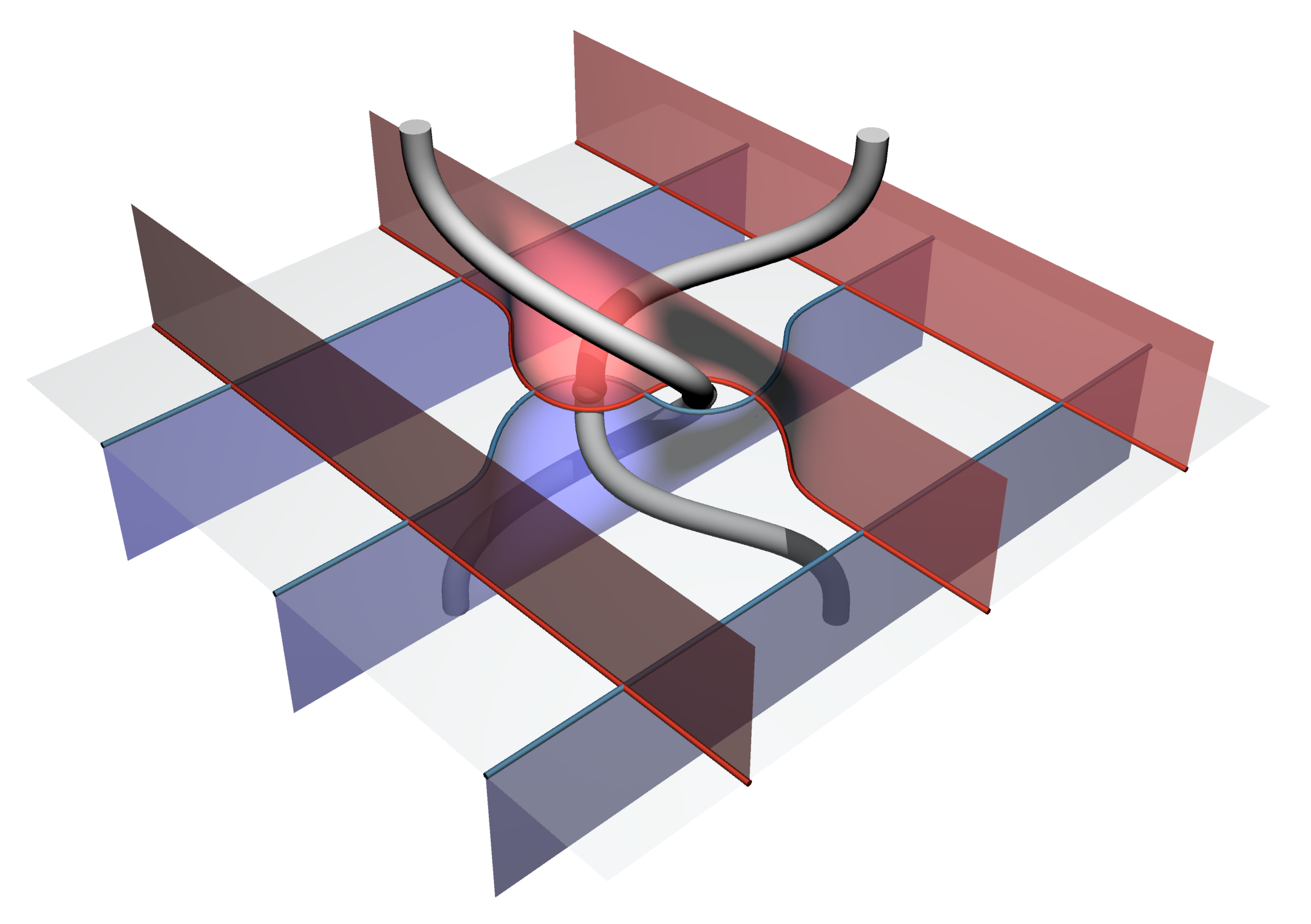}\\
K(p,q,q+1) && T_L
\end{array}
$
\caption{}
\label{fig:3d!}
\end{figure}

\smallskip
The manifolds, Heegaard splittings, knots, and fibers in the proof above all behave well with respect to certain involutions.
Let us therefore recast that proof in terms of bandings of between a two-bridge link $\ell$ and each an unknot $\hat{s}$ and the $(-3,5)$--torus knot $\hat{p}$.  We omit details of the correspondences.
\begin{proof}[Proof 2]
Let $A$ be the braid axis of the closure $\hat{s}$ of the $3$--braid $\sigma_1 \sigma_2$.
The positive trefoil $J$ may be viewed as the lift to $S^3$ of $A$ in the double cover of $S^3$ branched over the unknot $\hat{s}$.  A disk $D$ bounded by $A$ that intersects $\hat{s}$ three times lifts to a once-punctured torus fiber $F$ of $J$.  Any essential simple closed curve on $F$ is then isotopic on $F$ to a curve that is the lift of some simple arc $k$ on $D$ connecting two of the three points of $D \cap \hat{s}$.   We may simplify the presentation of this arc with a $3$--braid conjugation.  The top-left of Figure~\ref{fig:tangle} shows this setup where $\gamma$ is the conjugating $3$--braid.  Banding together the unknot $\hat{s}$ along $k$ with the framing bestowed upon it by $D$, we obtain a two-bridge link $\ell$ and a dual arc $k'$ as shown in the top-middle and top-right of Figure~\ref{fig:tangle}.  Observe that $A$ bounds a disk that intersects each $\ell$ and $k'$ just once.   

Performing $+1/2$--Dehn surgery along $A$ before the banding takes the closed $3$--braid $\hat{s}$ to $\hat{p}$, the closure of the $3$--braid $\sigma_1 \sigma_2 (\sigma_1 \sigma_2)^{-6} = (\sigma_1 \sigma_2)^{-5}$, as in the lower-left of Figure~\ref{fig:tangle}.  The closed braid $\hat{p}$ is the $(-3,5)$--torus knot and its double branched cover is $P^3$. (The pretzel knot $P(2,-3,-5)$ is isotopic to the $(-3,5)$--torus knot, see Figure~\ref{fig:isotopy}.)   Doing this surgery on $A$ after the banding does not change $\ell$, but does cause $a'$ to wrap around $\ell$ twice near $\ell \cap D$ giving an arc $t'$ as in the lower-right of Figure~\ref{fig:tangle}.  Minding framings, a banding of the two-bridge link $\ell$ along this arc $t'$ produces the $(-3,5)$--torus knot $\widehat{p}$.
\end{proof}

\begin{figure}
\centering
\includegraphics[width=4.5in]{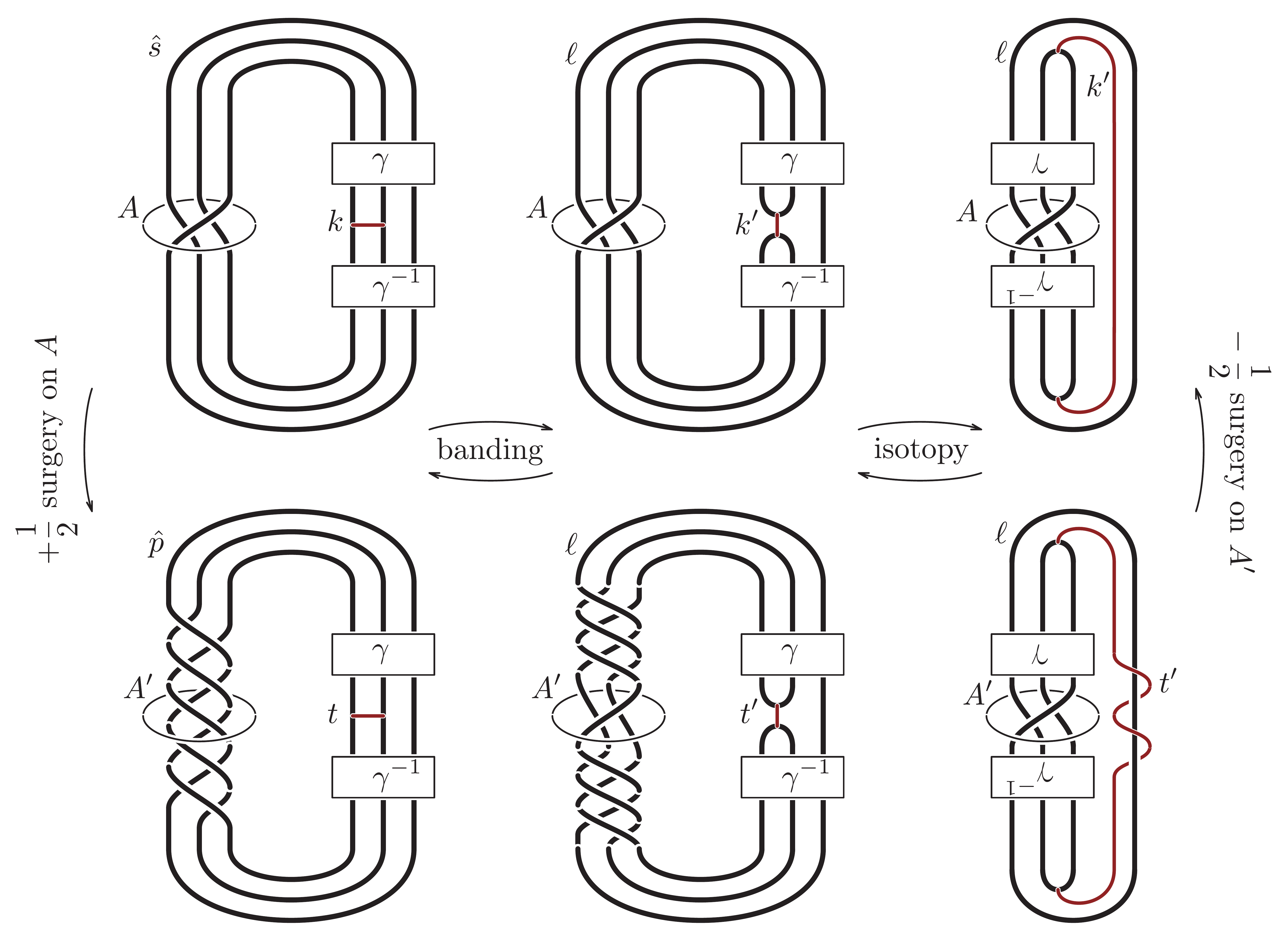}
\caption{}
\label{fig:tangle}
\end{figure}

\begin{figure}
\centering
\includegraphics[width=5.5in]{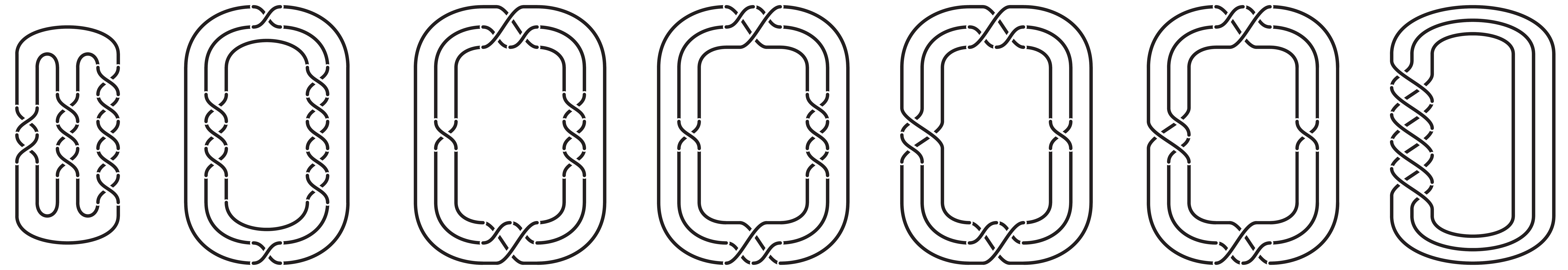}
\caption{}
\label{fig:isotopy}
\end{figure}

%%%%%%%%%%%%%%%%%%%%%%%
\bibliographystyle{amsplain}
\bibliography{trefoil.bib}
%%%%%%%%%%%%%%%%%%%%

\end{document}